\newtheorem{thm}{Theorem}[section]
\newtheorem{defi}[thm]{Definition}
\newtheorem{conj}[thm]{Conjecture}
\newtheorem{qn}[thm]{Question}
\DeclareMathOperator{\rk}{{rk}}
\begin{document}
\baselineskip=16pt

\subjclass[2020]{Primary 14J60}
\keywords{}

\author{Snehajit Misra}
\author{Nabanita Ray}

\address{Harish Chandra Research Institute,Chhatnag Road, Jhunsi
Prayagraj (Allahabad) 211 019
India.}
\email[Snehajit Misra]{misra08@gmail.com}
\address{Indraprastha Institute of Information Technology, Delhi (IIITD),
Okhla Phase-III, New Delhi-110020, India.}
\email[Nabanita Ray]{nabanitaray2910@gmail.com}

\begin{abstract}
 In this article, we investigate the stability of syzygy bundles corresponding to ample and globally generated vector bundles on smooth irreducible projective surfaces.
\end{abstract}

\title{On Stability of Syzygy Bundles}

\maketitle

\section{Introduction}
Let $X$ be a smooth irreducible projective variety of dimension $n$ over an algebraically closed field $k$, and let $E$ be a globally generated vector bundle of rank $r$ on $X$.
The syzygy bundle associated to the vector bundle $E$, denoted by $M_E$ is defined as the kernel of the evaluation map $$ev : H^0(X,E)\otimes \mathcal{O}_X\longrightarrow E.$$ Thus we have the following short exact sequence:
\begin{center}
 $0\longrightarrow M_{E}\longrightarrow H^0(X,E)\otimes \mathcal{O}_X\longrightarrow E\longrightarrow 0.$
\end{center}
These vector bundles  arise in a variety of geometric and algebraic
problems. For example what are the numerical conditions  on vector bundles $E$
and $F$ on $X$ such that the natural product map
$$H^0(X,E)\otimes H^0(X,F) \longrightarrow H^0(X,E\otimes F)$$ becomes surjective.
The stability of $M_E$ plays a crucial role in the understanding of the surjectivity of the above question. For example, the stability of $M_L$ is studied in \cite{PR87} and \cite{C08} when $L$ is a globally generated line bundle on a smooth complex irreducible projective curve $C$. More generally, the stability of these bundles $M_E$ have been studied in \cite{B94} when $E$ is a vector bundle on a smooth curve $C$. In particular, it is shown that if $E$ is semistable (see Definition \ref{defi3.1} for semistability) globally generated vector bundle and $\mu(E)\geq 2g(C)$, where $g(C)$ is the genus of the curve $C$, then $M_E$ is semistable. Furthermore, if  $E$ is stable globally generated vector bundle on a smooth curve $C$ under the hypothesis $\mu(E)\geq 2g(C)$,  then  $M_E$ is stable unless $\mu(E) = 2g(C)$ and either $C$ is hyperelliptic or $\Omega_C\hookrightarrow E$.

In \cite{ELM13}, the authors
study the stability of $M_L$  when $L$ is a very ample line bundle on a smooth surface. In fact they showed the following:  if we fix an ample divisor $H$ and an arbitrary divisor $P$ on an algebraic surface $X$ and given a large integer $d$, set
$$L_d = dH + P$$
and write $M_d = M_{L_d}$, then for sufficiently large $d$, the syzygy bundle $M_d$ is slope stable with respect
to ample divisor $H$. Ein-Lazarsfield-Mustopa conjectured the following about stability of  syzygy bundles corresponding to line bundles on higher dimesional varieties in \cite{ELM13}. \begin{conj}\label{conj1.1}
\cite[Conjecture 2.6]{ELM13} Let $X$ be a smooth projective variety of dimension $n$, and define $M_d$ as above. Then $M_d$ is $H$-stable for every $d\gg 0$.
\end{conj}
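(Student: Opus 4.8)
We sketch a possible line of attack.

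The plan is to argue by contradiction. Suppose that for arbitrarily large $d$ the bundle $M_d$ fails to be $H$-stable. Write $N_d=h^0(X,L_d)$; then $M_d$ is locally free of rank $N_d-1$ with $c_1(M_d)=-L_d$, and $H^0(X,M_d)=0$ because evaluation is an isomorphism on global sections. Asymptotic Riemann--Roch gives $N_d\sim d^n(H^n)/n!$, so $\mu_H(M_d)=-(L_d\cdot H^{n-1})/(N_d-1)<0$ and $\mu_H(M_d)\to 0$ like $-n!/d^{n-1}$. If $M_d$ is not $H$-stable, the maximal destabilizing subsheaf of the Harder--Narasimhan filtration (or, when $M_d$ is strictly semistable, a minimal--rank subsheaf of slope $\mu_H(M_d)$) supplies a saturated, $H$-semistable $S\subsetneq M_d$ with $\mu_H(S)\ge\mu_H(M_d)$. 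Saturatedness forces $S$ to be reflexive, and the chain $S\hookrightarrow M_d\hookrightarrow H^0(L_d)\otimes\mathcal{O}_X$ forces $H^0(X,S)=0$ and $\det S=\mathcal{O}_X(-D_S)$ for some effective divisor $D_S\ge 0$; in particular $\mu_H(S)\le 0$.

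Next I would dualize the defining sequence. By Serre duality and Serre vanishing, $H^0(X,L_d^{-1})=H^1(X,L_d^{-1})=0$ for $d\gg 0$, so dualizing yields
\[
0\longrightarrow L_d^{-1}\longrightarrow H^0(L_d)^{\vee}\otimes\mathcal{O}_X\longrightarrow M_d^{\vee}\longrightarrow 0,
\]
whence $M_d^{\vee}$, and every exterior power of it, is globally generated, and $h^0(M_d^{\vee})=N_d$. A saturated destabilizing $S\subset M_d$ dualizes to a map $M_d^{\vee}\to S^{\vee}$ whose image $S'$ is globally generated with $(S')^{\vee\vee}=S^{\vee}$ and $\det S'=\det S^{\vee}=\mathcal{O}_X(D_S)$. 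If $D_S=0$ then $\det S'$ is trivial; but a globally generated sheaf with trivial determinant contains a trivial subsheaf of full rank with which it agrees in codimension one, and passing to reflexive hulls then gives $S^{\vee}\cong\mathcal{O}_X^{\oplus\rk S}$, hence $\mathcal{O}_X^{\oplus\rk S}\hookrightarrow M_d$, contradicting $H^0(X,M_d)=0$. Consequently every destabilizing $S$ has $c_1(S)=-D_S$ with $D_S$ a nonzero effective divisor, so $D_S\cdot H^{n-1}\ge 1$, and $\mu_H(S)\ge\mu_H(M_d)$ forces $\rk S\ge (D_S\cdot H^{n-1})/\bigl(-\mu_H(M_d)\bigr)$, which grows at least like a constant times $d^{n-1}$.

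It remains to exclude a destabilizing $S$ of such large rank---equivalently, a torsion-free quotient $M_d\twoheadrightarrow P$ of small rank with $\mu_H(P)\le\mu_H(M_d)$---and I expect this to be the main obstacle. The plan is to push the sequence $0\to S\to M_d\to P\to 0$ out along $M_d\hookrightarrow H^0(L_d)\otimes\mathcal{O}_X$, producing an exact sequence $0\to P\to T\to L_d\to 0$ with $T$ a globally generated torsion-free sheaf of small rank satisfying $c_1(T)=D_S$ and $h^0(T)\ge N_d$, and then to derive a numerical contradiction from three kinds of information: the Chern-class relations among $S,\,P,\,T,\,L_d$; asymptotic Riemann--Roch estimates for the $h^0$ of the relevant twists (note $h^0(T)\ge N_d$ is huge compared with $\rk T$); and a Bogomolov--Gieseker-type inequality for the $H$-semistable sheaf $S$, bounding $c_2(S)$ from below by $(\rk S-1)c_1(S)^2/(2\rk S)$. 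Intuitively, the enormous rank of $S$ (of order $d^{n-1}$) together with the control $|c_1(S)\cdot H^{n-1}|$ small makes $S$ too close to a flat bundle to be sustained as a subsheaf of the highly nontrivial $M_d$; one would also use that for $d\gg 0$ the multiplication maps $H^0(\mathcal{O}_X(H))\otimes H^0(L_d-H)\to H^0(L_d)$ are surjective (Serre vanishing/Mumford), which realizes $M_d$ as a quotient of a sheaf assembled from $M_{L_d-H}$ and the fixed bundle $M_H$ and so bounds the Harder--Narasimhan slopes of $M_d$ from below. On surfaces this is, in essence, the circle of ideas of Ein--Lazarsfeld--Mustopa, and is the setting in which the present paper operates (with a vector bundle $E$ in place of the line bundle $L_d$); in dimension $n\ge 3$ the Bogomolov-type input is considerably weaker, and keeping these estimates tight is exactly where the difficulty of the general conjecture is concentrated.
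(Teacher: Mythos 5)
The statement you have been asked to prove is Conjecture \ref{conj1.1}, which this paper does not prove: it is quoted verbatim from \cite{ELM13}, and the authors merely record that it has been settled in \cite{R24}. The results actually established here are surface-specific theorems for syzygy bundles of vector bundles $E$, proved by restricting a putative destabilizing subsheaf to a curve $C\in\lvert H\rvert$, using finiteness of a multiplication map to bound ranks, and invoking Butler's theorem \cite{B94} on $C$. So there is no in-paper argument to measure your sketch against, and it must be judged on its own terms.

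On those terms there is a genuine gap, which you yourself flag. Your preliminary reductions are correct and standard: $H^0(X,M_d)=0$; a saturated destabilizing $S\subset M_d$ is reflexive with $c_1(S)=-D_S$ for $D_S$ effective; dualizing shows $M_d^{\vee}$ is globally generated and rules out $D_S=0$; and $\mu_H(S)\geq\mu_H(M_d)$ then forces $\rk S$ to grow like $d^{n-1}$ while $D_S\cdot H^{n-1}$ stays bounded. But all of this only reformulates instability as the existence of a huge-rank, numerically almost trivial, $H$-semistable subsheaf with no sections; excluding such an $S$ \emph{is} the conjecture, and no mechanism for doing so is supplied. The tools you list do not obviously close the argument: the Bogomolov--Gieseker bound $c_2(S)\geq\frac{\rk S-1}{2\rk S}c_1(S)^2$ becomes essentially vacuous as $\rk S\to\infty$ with $c_1(S)\cdot H^{n-1}$ bounded (numerically $S$ could resemble $\mathcal{O}_X(-D_S)\oplus\mathcal{O}_X^{\oplus(\rk S-1)}$, which is excluded only by the cohomological input $H^0(M_d)=0$, not by Chern-class inequalities); in dimension $n\geq 3$ there is no direct Bogomolov-type control without restricting to surfaces, where the relevant constants degenerate with $d$; and the pushout sheaf $T=(H^0(L_d)\otimes\mathcal{O}_X)/S$, though globally generated with $h^0(T)\geq N_d$ and bounded $c_1(T)\cdot H^{n-1}$, is not confronted with any estimate that actually produces a contradiction. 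Until that final step is carried out --- and it is precisely where \cite{ELM13} (for surfaces) and \cite{R24} (in general) do their real work --- the proposal is a plausible reduction, not a proof.
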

This above mentioned Conjecture \ref{conj1.1} is solved in \cite{R24} (see \cite[Theorem 4.3, Corollary 4.4]{R24}.)

However, the results in \cite{ELM13} is not an effective one. An effective version about stability of $M_L$ is given in \cite{C11},\cite{C12},\cite{LZ22},\cite{R24} for ample and globally generated line bundles $L$ mostly over various smooth projective surfaces.
\vspace{2mm}

Recently, the stability of $M_E$ is studied for stable vector bundles $E$ of higher rank on a smooth projective surface $X$.
For any vector
bundle $E$ on a smooth polarized variety $(X,H)$ and $m > 0$, we denote $E(m) := E \otimes \mathcal{O}_X(mH)$. The authors in \cite{BP23} prove that if $E$ is an $H$-stable vector bundle on a smooth projective surface $X$, then $M_{E(m)}$ is $H$-stable for $m\gg 0$.
However no such effective result is known for stability of syzygy bundles corresponding to stable globally generated vector bundles on smooth surfaces.
\vspace{2mm}

The purpose of this paper is to investigate the stability of syzygy bundles associated to
a globally generated vector bundles on a smooth projective algebraic surface. In this paper we prove the following :

\begin{thm}\label{thm1}
 Let $X$ be a smooth irreducible complex projective surface. Let $E$ be an ample and globally generated vector bundle on $X$, $H$ be an ample divisor on $X$ such that there exists an irreducible smooth curve $C\in  \lvert H\rvert$. Further assume that 
\begin{enumerate}
  \item $E\otimes \mathcal{O}_X(-H)$ is globally generated and $\dim_{\mathbb{C}}H^1\bigl(X,E\otimes \mathcal{O}_X(-H)\bigr) = 0.$
  \item If $W \subseteq E\otimes \mathcal{O}_X(-H)$ generates $E\otimes \mathcal{O}_X(-H)$, then the natural multiplication map
$$H^0(X,\mathcal{O}_X(H)\otimes m_x)\bigr) \otimes  W \longrightarrow H^0(X,E\otimes m_x\bigr)$$
is surjective.
  \item $\dim_{\mathbb{C}}H^0\bigl(X,\mathcal{O}_X(H)\bigr)\geq \dim_{\mathbb{C}}H^0\bigl(C,E\vert_C\bigr)-\rk(E)+1$.
  \item $M_{E\vert_C}$ is semistable.
 \end{enumerate}
Then $M_E$ is $H$-stable.
 \end{thm}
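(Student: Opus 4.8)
The plan is to pass to the smooth curve $C\in|H|$, compare $M_E|_C$ with the syzygy bundle $M_{E|_C}$ of the restricted bundle, and use hypothesis~(4) together with the surjectivity/dimension hypotheses~(2) and~(3) to exclude destabilizing subsheaves of $M_E$. First I would fix notation: write $r=\rk E$, $N=h^0(X,E)-r=\rk M_E$ and $s=\dim_{\mathbb C}H^0\bigl(X,E\otimes\mathcal O_X(-H)\bigr)$. From $0\to E\otimes\mathcal O_X(-H)\to E\to E|_C\to 0$ together with hypothesis~(1) one gets $0\to H^0\bigl(X,E\otimes\mathcal O_X(-H)\bigr)\to H^0(X,E)\to H^0(C,E|_C)\to 0$, so that $\rk M_{E|_C}=N-s$. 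Restricting the defining sequence of $M_E$ to $C$ (it remains exact, every term being locally free) and comparing with the defining sequence of $M_{E|_C}$ via the snake lemma yields the exact sequence
\begin{equation}\label{eq:restr}
0\longrightarrow H^0\bigl(X,E\otimes\mathcal O_X(-H)\bigr)\otimes\mathcal O_C\longrightarrow M_E|_C\longrightarrow M_{E|_C}\longrightarrow 0 .
\end{equation}
Since $\deg_C(M_E|_C)=c_1(M_E)\cdot H=-\,c_1(E)\cdot H$ and $\rk(M_E|_C)=N$, the $H$-slope $\mu_H(M_E)$ equals $\deg_C(M_E|_C)/N$; moreover $\mu(M_{E|_C})=-\,c_1(E)\cdot H/(N-s)\le\mu_H(M_E)<0$, the last inequality because $\det E$ is ample.

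Suppose now that $M_E$ is not $H$-stable, and choose a saturated destabilizing subsheaf $\mathcal F\subset M_E$ with $0<p:=\rk\mathcal F<N$ and $\mu_H(\mathcal F)\ge\mu_H(M_E)$. Restricting to $C$ --- chosen, or replaced by a suitable member of $|H|$, so as to avoid the finite set where the reflexive sheaves $\mathcal F$ and $M_E/\mathcal F$ fail to be locally free --- produces a rank-$p$ subsheaf $\mathcal F_C\subseteq M_E|_C$ with $\deg_C\mathcal F_C=c_1(\mathcal F)\cdot H\ge p\,\mu_H(M_E)$. Put $\mathcal K=\mathcal F_C\cap\bigl(H^0(X,E\otimes\mathcal O_X(-H))\otimes\mathcal O_C\bigr)$, a subsheaf of a trivial bundle, so $\deg\mathcal K\le 0$, and let $\mathcal G:=\mathcal F_C/\mathcal K$, which injects into $M_{E|_C}$. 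By hypothesis~(4), $\mu(\mathcal G)\le\mu(M_{E|_C})$, and since $\deg\mathcal G=\deg\mathcal F_C-\deg\mathcal K\ge\deg\mathcal F_C\ge p\,\mu_H(M_E)$, clearing denominators gives $\rk\mathcal K\cdot N\ge p\cdot s$. Thus a destabilizer is forced to acquire a nontrivial ``constant part'' along $C$, and a short inspection of the equality cases shows that the only configuration not already contradictory is the one in which $\mathcal F_C$ is essentially contained in the trivial subbundle $H^0(X,E\otimes\mathcal O_X(-H))\otimes\mathcal O_C$ appearing in~\eqref{eq:restr}.

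Excluding this last case is the crux and the step I expect to be the main obstacle; this is where hypotheses~(2) and~(3) are essential. If $\mathcal F_C$ lies in that trivial subbundle, then the composite $\mathcal F\hookrightarrow M_E\hookrightarrow H^0(X,E)\otimes\mathcal O_X\to H^0(C,E|_C)\otimes\mathcal O_X$ (using the restriction $H^0(X,E)\twoheadrightarrow H^0(C,E|_C)$) vanishes along $C$, hence factors through a map $\mathcal F\to H^0(C,E|_C)\otimes\mathcal O_X(-H)$; one should check that $\mathcal F$ may be taken to be the maximal destabilizing subsheaf, which is semistable. If this factored map is nonzero, a torsion-free quotient of $\mathcal F$ embeds, up to twist, into a direct sum of copies of $\mathcal O_X(-H)$, which forces $\mu_H(\mathcal F)\le -H^2$, and hypothesis~(3) --- which bounds $\rk M_{E|_C}=N-s$ in terms of $h^0(X,\mathcal O_X(H))$, thereby controlling $H^2$ against $c_1(E)\cdot H/N$ --- is what makes this contradict $\mu_H(\mathcal F)\ge\mu_H(M_E)$. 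If the factored map is zero, then $\mathcal F$ lies inside the copy of $M_{E\otimes\mathcal O_X(-H)}$ sitting in $M_E$, and here hypothesis~(2) --- which expresses the fibres of $M_E$ as spanned by products of sections of $\mathcal O_X(H)$ vanishing at a point with sections from a generating subspace $W$ of $E\otimes\mathcal O_X(-H)$ --- is what pins the slope of $\mathcal F$ strictly below $\mu_H(M_E)$, again a contradiction. The delicate points will be the torsion bookkeeping in the ``factors through $\mathcal O_X(-H)$'' step (possibly taking $C$ general in $|H|$ and verifying that (1)--(4) persist) and the precise extraction of the closing numerical inequality from (2) and (3); note that it is the strict form ``$\ge\dots+1$'' in~(3) that upgrades the $H$-semistability obtained from the earlier steps to genuine $H$-stability of $M_E$.
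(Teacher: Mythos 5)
Your setup --- restricting to $C$, the exact sequence $0\to H^0(X,E\otimes\mathcal O_X(-H))\otimes\mathcal O_C\to M_E|_C\to M_{E|_C}\to 0$, the decomposition of $\mathcal F_C$ into a ``constant part'' $\mathcal K$ and a quotient $\mathcal G\hookrightarrow M_{E|_C}$, and the slope computation giving $N\cdot\rk\mathcal K\ge p\cdot s$ --- coincides with the paper's. The divergence, and the gap, comes immediately after. The inequality $N\rk\mathcal K\ge ps$ only says $\rk\mathcal K\ge (s/N)\,\rk\mathcal F$; since $s<N$, this is perfectly consistent with $0<\rk\mathcal K<\rk\mathcal F$ and with $\mathcal F_C$ being nowhere near contained in the trivial subbundle. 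Your claim that ``a short inspection of the equality cases shows that the only configuration not already contradictory is the one in which $\mathcal F_C$ is essentially contained in the trivial subbundle'' is therefore unjustified, and I do not believe it is true: for intermediate values of $\rk\mathcal K$ there is no contradiction at this stage, and the entire purpose of hypothesis (2) is to supply the missing second inequality relating $\rk\mathcal F$ and $\rk\mathcal K$.

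Concretely, what the paper does --- and what is absent from your proposal --- is to use hypothesis (2) to show that the multiplication map $\nu:\mathbb{P}_{sub}\bigl(H^0(\mathcal O_X(H)\otimes m_x)\bigr)\times\mathbb{P}_{sub}\bigl(H^0(E\otimes\mathcal O_X(-H))\bigr)\to\mathbb{P}_{sub}\bigl(H^0(E\otimes m_x)\bigr)$ is a finite morphism, and then to run a dimension count on $Z=\nu^{-1}\mathbb{P}_{sub}\bigl((\mathcal F_C)_x\bigr)$: the first projection of $Z$ is dominant with fibres of dimension at least $\rk\mathcal K-1$, which yields the lower bound $\rk\mathcal F\ge h^0(\mathcal O_X(H))+\rk\mathcal K-1$. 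Feeding the strict form of $\rk\mathcal K\ge (s/N)\rk\mathcal F$ into this bound and invoking hypothesis (3) produces $\rk\mathcal F>\rk M_E$, the desired contradiction. Your substitute arguments in the final paragraph (the ``factors through $\mathcal O_X(-H)$'' dichotomy, the claim $\mu_H(\mathcal F)\le -H^2$, and the assertion that hypothesis (2) ``pins the slope of $\mathcal F$ strictly below $\mu_H(M_E)$'') are not carried out, and the numerical links you assert --- e.g.\ that hypothesis (3) makes $\mu_H(\mathcal F)\le -H^2$ contradict $\mu_H(\mathcal F)\ge -c_1(E)\cdot H/N$ --- do not follow from the stated hypotheses without further input. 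As written, the proof is incomplete at its central step.
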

 As a corollary to the above theorem, we have the following result extending the result in \cite{BP23} to semistable globally generated vector bundles on smooth surfaces. Our proofs  are inspired by the ideas in \cite{ELM13}, \cite{LZ22} and \cite{BP23} respectively.
 \begin{thm}
  Let $X$ be a smooth complex surface and we fix a very ample line bundle $H$ on $X$. Let $E$ be an $H$-semistable globally generated vector bundle of rank $r$ on $X$.  Then for large enough $m\gg 0$, the associated sygyzy bundle $M_{E(m)}$ is $H$-stable.
 \end{thm}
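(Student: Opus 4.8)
The plan is to deduce the second theorem from Theorem \ref{thm1} by checking, for $E(m)$ in place of $E$ with $m \gg 0$, the four hypotheses of Theorem \ref{thm1}. First, since $H$ is very ample, a general member $C \in |H|$ is an irreducible smooth curve (Bertini), so the curve hypothesis is automatic. Ampleness of $E(m)$ for $m \gg 0$ is clear, and global generation of $E(m)$ and of $E(m) \otimes \mathcal{O}_X(-H) = E((m-1)H)$ follows from Serre vanishing / Castelnuovo--Mumford regularity once $m$ is large; likewise $H^1(X, E((m-1)H)) = 0$ for $m \gg 0$. So condition (1) holds for all large $m$.

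For condition (2), I would use Castelnuovo--Mumford regularity: for $m \gg 0$ the bundle $E((m-1)H)$ is $0$-regular with respect to $H$, and then the standard multiplication/surjectivity statements for regular sheaves (à la Mumford) give that $H^0(\mathcal{O}_X(H) \otimes m_x) \otimes H^0(E((m-1)H)) \to H^0(E(m) \otimes m_x)$ is surjective; taking $W = H^0(E((m-1)H))$, which certainly generates $E((m-1)H)$, settles (2). Condition (3) is an asymptotic inequality: by Riemann--Roch on the surface, $h^0(X, \mathcal{O}_X(mH))$ grows like $\tfrac{m^2}{2}(H^2)$, a quadratic in $m$, whereas $h^0(C, E(m)|_C)$ grows only linearly in $m$ (it is $r \cdot \deg(mH|_C) + \text{const} = r m (H^2) + \text{const}$ by Riemann--Roch on the curve $C$, using $H^1$ vanishing for large $m$). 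Hence for $m \gg 0$ the quadratic beats the linear term and (3) holds.

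The main obstacle is condition (4): one must show $M_{E(m)|_C}$ is semistable on the curve $C$ for $m \gg 0$. Here I would invoke the curve case recalled in the introduction from \cite{B94}: if a globally generated vector bundle $G$ on a smooth curve $C$ is semistable with $\mu(G) \geq 2g(C)$, then $M_G$ is semistable. So it suffices to check that $G := E(m)|_C = (E|_C)(mH|_C)$ is semistable with slope at least $2g(C)$ once $m$ is large. Twisting by a line bundle preserves (semi)stability, so semistability of $G$ reduces to semistability of $E|_C$; since $E$ is $H$-semistable on $X$, its restriction to a general curve $C \in |H|$ is semistable by a Mehta--Ramanathan-type restriction theorem (for a very ample $H$ and $m$ large we may instead restrict the already-twisted, clearly semistable-after-twist bundle, or simply restrict $E$ itself to a general smooth $C$, which for very ample $H$ is fine). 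Finally $\mu(G) = \mu(E|_C) + r \cdot m(H^2) \to \infty$, so $\mu(G) \geq 2g(C)$ for $m \gg 0$. This gives (4), and Theorem \ref{thm1} then yields that $M_{E(m)}$ is $H$-stable for all sufficiently large $m$. $\qed$
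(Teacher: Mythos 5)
Your overall strategy --- reduce to Theorem \ref{thm1} by verifying its four hypotheses for $E(m)$, and obtain hypothesis (4) from Butler's theorem by making $\mu\bigl(E(m)\vert_C\bigr)\geq 2g(C)$ --- is exactly the paper's strategy. But there is a genuine gap in how you set it up: you apply Theorem \ref{thm1} with the \emph{fixed} polarization $H$ and a \emph{fixed} curve $C\in\lvert H\rvert$, and with that choice hypothesis (3) is false for $m\gg 0$. Indeed, hypothesis (3) then reads $h^0\bigl(X,\mathcal{O}_X(H)\bigr)\geq h^0\bigl(C,E(m)\vert_C\bigr)-r+1$; the left-hand side is a constant independent of $m$, while by Riemann--Roch on the fixed curve $C$ the right-hand side is $\deg(E\vert_C)+rm\,H^2+r(1-g(C))-r+1$, which grows linearly in $m$. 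Your verification of (3) silently substitutes $h^0\bigl(X,\mathcal{O}_X(mH)\bigr)$ for $h^0\bigl(X,\mathcal{O}_X(H)\bigr)$, which is not what the theorem requires under your setup --- that quantity would only be relevant if you had taken the polarization (and the curve) in the system $\lvert mH\rvert$. A secondary problem with the same root cause is in (4): Mehta--Ramanathan gives semistability of $E\vert_C$ only for general curves $C\in\lvert kH\rvert$ with $k\gg 0$, not for a general member of $\lvert H\rvert$ itself, so your parenthetical claim that restricting to a general smooth $C\in\lvert H\rvert$ ``is fine'' is unjustified.

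The paper's fix is precisely to rescale the polarization: it chooses $n\gg0$ and $m\gg n$ and applies Theorem \ref{thm1} to $E(m)$ with the ample divisor $(m-n)H$ and a general smooth curve $C\in\lvert(m-n)H\rvert$ (noting that $(m-n)H$-stability of $M_{E(m)}$ is the same as $H$-stability). Then $E(m)\otimes\mathcal{O}_X(-(m-n)H)=E(n)$ plays the role of your $E((m-1)H)$, Mehta--Ramanathan applies legitimately to $C\in\lvert(m-n)H\rvert$, and in hypothesis (3) both sides grow with $m-n$ --- the left-hand side quadratically, the right-hand side only after the genus term $rg(C)$ (itself quadratic in $m-n$) is subtracted --- so a genuine Riemann--Roch computation, not a constant-versus-linear comparison, is needed and is carried out in the paper. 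Your proposal would become correct if you made this change of polarization and redid the estimates in (3) and the restriction argument in (4) accordingly.
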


\section{Notations and Conventions}
We work over the field of complex numbers $\mathbb{C}$. Given a coherent
sheaf $\mathcal{G}$ on a variety $X$, we write $h^i(\mathcal{G})$ to denote the dimension of the $i$-th cohomology group $H^i(X,\mathcal{G})$. The sheaf $K_X$ will denote the canonical sheaf on $X$ . Throught this article, (semi)stability means slope (semi)stability. The first Chern class of a vector bundle $E$ will be denoted by $c_1(E)$. The rank of a vector bundle $E$ will be denoted by rk$(E)$. For any closed
point $x$, let $m_x$ denotes the ideal defining the  point $x$. For any vector space $V$, $\mathbb{P}_{sub}(V)$ denotes the projective space associated to $V$ consisting of 1-dimensional linear subspace of $V$.
\section{Main Result}
The aim of this section is to prove Theorem \ref{thm1}.
Let $X$ be a smooth complex projective variety of dimension $n$ with a fixed ample line bundle $H$ on it.
For a non-zero vector bundle $V$ of rank $r$ on $X$, the $H$-slope of $V$ is defined as
\begin{align*}
\mu_H(V) := \frac{c_1(V)\cdot H^{n-1}}{r} \in \mathbb{Q},
\end{align*}
where $c_1(V)$ denotes the first Chern class of $V$.
\begin{defi}\label{defi3.1}
A vector bundle $V$ on $X$ is said to be $H$-semistable (respectively stable) if $\mu_H(W) \leq \mu_H(V)$ (respectively $\mu_H(W) < \mu_H(V)$) for all subsheaves $W$ of $V$. A vector bundle $V$ on $X$ is called $H$-unstable if it is not $H$-semistable.
\end{defi}
See \cite{HL10} for more details about semistability.
\vspace{1mm}

Recall the short exact sequence for a globally generated vector bundle $E$.
\begin{center}
 $0\longrightarrow M_{E}\longrightarrow H^0(X,E)\otimes \mathcal{O}_X\longrightarrow E\longrightarrow 0.$
\end{center}

Hence, the $H$-slope of a syzygy bundle $M_E$ associated to a globally generated vector bundle $E$ is as follows :
$$\mu_H(M_E) = \frac{c_1(M_E)\cdot H^{n-1}}{\rk(M_E)} = \frac{-c_1(E)\cdot H^{n-1}}{h^0(E)-\rk(E)}.
$$

In addition, if $E$ is ample, then $c_1(E)$ is also ample, and hence $\mu_H(M_E) < 0$ in this case.

\begin{thm}
 Let $X$ be a smooth irreducible complex projective surface. Let $E$ be an ample and globally generated vector bundle on $X$, $H$ be a very ample divisor on $X$ such that there exists an irreducible smooth curve $C\in  \lvert H\rvert$. Further assume that
 \begin{enumerate}
  \item $E\otimes \mathcal{O}_X(-H)$ is globally generated and $\dim_{\mathbb{C}}H^1\bigl(X,E\otimes \mathcal{O}_X(-H)\bigr) = 0.$
  \item If $W \subseteq E\otimes \mathcal{O}_X(-H)$ generates $E\otimes \mathcal{O}_X(-H)$, then the natural multiplication map
$$H^0(X,\mathcal{O}_X(H)\otimes m_x)\bigr) \otimes  W \longrightarrow H^0(X,E\otimes m_x\bigr)$$
is surjective.
  \item $\dim_{\mathbb{C}}H^0\bigl(X,\mathcal{O}_X(H)\bigr)\geq \dim_{\mathbb{C}}H^0\bigl(C,E\vert_C\bigr)-\rk(E)+1$.
  \item $M_{E\vert_C}$ is semistable.
 \end{enumerate}
Then $M_E$ is $H$-stable.
 \end{thm}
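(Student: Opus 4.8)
The plan is to argue by contradiction, transporting the problem to the smooth curve $C\in|H|$ and playing the trivial subbundle coming from $H^0(X,E\otimes\mathcal{O}_X(-H))$ against the semistability of $M_{E|_C}$ granted by hypothesis (4). Throughout set $r=\rk(E)$, $M_H:=M_{\mathcal{O}_X(H)}$, and let $\mu_H$ denote the slope with respect to $H$.

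\emph{Setup and structural input.} Since $M_E$ is not $H$-stable, there is a saturated subsheaf $S$ with $0\ne S\subsetneq M_E$ and $\mu_H(S)\ge\mu_H(M_E)$ (take the maximal destabilizing subsheaf if $M_E$ is unstable); write $s=\rk(S)$, so $1\le s<\rk(M_E)=h^0(X,E)-r$, and put $Q:=M_E/S$, which is torsion-free. As $Q$ is torsion-free, multiplication by a section of $\mathcal{O}_X(H)$ with divisor $C$ is injective on $Q$, so $\mathrm{Tor}_1^{\mathcal{O}_X}(Q,\mathcal{O}_C)=0$; tensoring $0\to S\to M_E\to Q\to 0$ with $\mathcal{O}_C$ thus stays exact, $S|_C\subseteq M_E|_C$ is torsion-free of rank $s$, and $\deg_C(S|_C)=c_1(S)\cdot H=s\,\mu_H(S)$. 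I then record two facts. First, from $0\to E\otimes\mathcal{O}_X(-H)\to E\to E|_C\to 0$ and hypothesis (1) the restriction $H^0(X,E)\to H^0(C,E|_C)$ is onto with kernel $H^0(X,E\otimes\mathcal{O}_X(-H))$; restricting the defining sequence of $M_E$ to $C$ and comparing it with that of $M_{E|_C}$, the snake lemma yields
$$0\longrightarrow H^0\bigl(X,E\otimes\mathcal{O}_X(-H)\bigr)\otimes\mathcal{O}_C\longrightarrow M_E|_C\longrightarrow M_{E|_C}\longrightarrow 0,$$
and I call the trivial subbundle $T$; it is nonzero since $E\otimes\mathcal{O}_X(-H)$ is globally generated and nonzero. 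Second, for any $W\subseteq H^0(X,E\otimes\mathcal{O}_X(-H))$ generating $E\otimes\mathcal{O}_X(-H)$, the multiplication map $H^0(X,\mathcal{O}_X(H))\otimes W\to H^0(X,E)$ induces a natural morphism of sheaves $\psi\colon M_H\otimes_{\mathbb{C}}W\to M_E$ (a map out of $\dim W$ copies of $M_H$); since the fibre at $x$ of the syzygy bundle of any globally generated bundle $\mathcal{F}$ equals $H^0(X,\mathcal{F}\otimes m_x)$, hypothesis (2) says precisely that $\psi$ is surjective on every fibre, hence $\psi$ is surjective, so $M_E$ is a quotient of a direct sum of copies of $M_H$.

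\emph{The slope estimate and the easy case.} Compose $S|_C\hookrightarrow M_E|_C\twoheadrightarrow M_{E|_C}$; let $A$ be the image, of rank $a\le s$, and set $B:=S|_C\cap T$, of rank $s-a$. Since $B$ is a subsheaf of the trivial bundle $T$ on $C$, $\deg_C(B)\le 0$; since $M_{E|_C}$ is $H$-semistable by hypothesis (4), $\deg_C(A)\le a\,\mu(M_{E|_C})$; adding, $s\,\mu_H(S)=\deg_C(S|_C)\le a\,\mu(M_{E|_C})$. Now $E$ ample makes $E|_C$ ample and globally generated, so $\deg_C(E|_C)=c_1(E)\cdot H>0$ and $\rk(M_{E|_C})=h^0(C,E|_C)-r\ge 1$ (a globally generated rank-$r$ bundle with only $r$ sections is trivial, hence not ample), whence $\mu(M_{E|_C})<0$; moreover $h^0(X,E)=h^0(C,E|_C)+h^0(X,E\otimes\mathcal{O}_X(-H))>h^0(C,E|_C)$, so comparing denominators gives $\mu(M_{E|_C})<\mu_H(M_E)<0$. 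Hence if $a=s$ (equivalently $B=0$) then $s\,\mu_H(S)\le s\,\mu(M_{E|_C})<s\,\mu_H(M_E)$, i.e. $\mu_H(S)<\mu_H(M_E)$, contradicting $\mu_H(S)\ge\mu_H(M_E)$.

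\emph{The main obstacle.} It remains to exclude the case $a<s$, where $S|_C$ meets the trivial subbundle $T$ nontrivially; this is the heart of the matter, and the only place where the precise content of hypotheses (2) and (3) is used. Unwinding the desired inequality $\mu_H(S)<\mu_H(M_E)$ in this case, and using $\rk(M_{E|_C})=h^0(C,E|_C)-r$ and $h^0(X,E)=h^0(C,E|_C)+h^0(X,E\otimes\mathcal{O}_X(-H))$, one checks it suffices to prove the purely numerical inequality
$$a\cdot h^0\bigl(X,E\otimes\mathcal{O}_X(-H)\bigr)\ >\ (s-a)\cdot\rk(M_{E|_C}),$$
and since $s-a=\rk(B)\le h^0(X,E\otimes\mathcal{O}_X(-H))$ automatically, this is really a lower bound on $a=\rk(A)$ in terms of $\rk(B)$. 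To obtain such a bound I would bring in the surjection $\psi\colon M_H\otimes_{\mathbb{C}}W\twoheadrightarrow M_E$: restricting $\psi$ to $C$ and composing with $M_E|_C\twoheadrightarrow M_{E|_C}$ presents $M_{E|_C}$ as a quotient of a direct sum of copies of $M_H|_C$, and, pulling the pair $(A,B)$ back along this presentation, the rank inequality of hypothesis (3)---which is exactly $\rk(M_H)\ge\rk(M_{E|_C})$---should be what forbids $a$ from being too small, under penalty of producing either a destabilizing subsheaf of $M_E$ of impossibly large rank or a quotient of $M_H$ violating hypothesis (3). This simultaneous control of the two ranks $a$ and $s-a$ (together with $\deg_C(B)$) is the step I expect to be genuinely delicate; granting it, $s\,\mu_H(S)\le a\,\mu(M_{E|_C})<s\,\mu_H(M_E)$, again contradicting $\mu_H(S)\ge\mu_H(M_E)$. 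With both cases excluded, $M_E$ is $H$-stable.
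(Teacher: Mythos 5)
Your setup coincides with the paper's: restrict the destabilizing subsheaf $S$ to $C$, use hypothesis (1) to get the exact sequence $0\to T\to M_E|_C\to M_{E|_C}\to 0$ with $T$ trivial of rank $h^0\bigl(X,E\otimes\mathcal{O}_X(-H)\bigr)$, split into the cases $B=S|_C\cap T=0$ and $B\neq 0$, and dispose of the first by comparing $\mu(M_{E|_C})<\mu_H(M_E)<0$ with the semistability of $M_{E|_C}$. Your reduction of the second case to the numerical inequality $a\,h^0\bigl(X,E\otimes\mathcal{O}_X(-H)\bigr)>(s-a)\,\rk(M_{E|_C})$ is also a correct reformulation of what remains to be shown (it is exactly the negation of the inequality the paper derives from $\mu_H(S)\ge\mu_H(M_E)$). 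But there the proof stops: the step you describe as ``genuinely delicate'' and then grant is the entire content of the theorem --- it is the only place where hypotheses (2) and (3) act --- and the mechanism you sketch for it (pulling $(A,B)$ back along the sheaf surjection $M_H\otimes W\twoheadrightarrow M_E$ and invoking $\rk(M_H)\ge\rk(M_{E|_C})$) is neither executed nor the right lever; nothing in that sketch produces the required lower bound on $a$.

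What the paper does at this point is fibrewise, not sheaf-theoretic. Fix $x\in C$ and consider the multiplication morphism
$\nu:\mathbb{P}_{sub}\bigl(H^0(X,\mathcal{O}_X(H)\otimes m_x)\bigr)\times\mathbb{P}_{sub}\bigl(H^0(X,E\otimes\mathcal{O}_X(-H))\bigr)\to\mathbb{P}_{sub}\bigl(H^0(X,E\otimes m_x)\bigr)=\mathbb{P}_{sub}\bigl((M_E|_C)_x\bigr)$,
which is finite because it pulls $\mathcal{O}(1)$ back to the ample $\mathcal{O}(1,1)$. Setting $Z=\nu^{-1}\bigl(\mathbb{P}_{sub}((S|_C)_x)\bigr)$, finiteness gives $\dim Z\le s-1$, while the first projection of $Z$ is dominant with fibres of dimension at least $\rk(B)-1$, since $B_x$ injects into $H^0\bigl(X,E\otimes\mathcal{O}_X(-H)\bigr)$ and the corresponding decomposable classes land in $\mathbb{P}_{sub}\bigl((S|_C)_x\bigr)$. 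Comparing dimensions yields $s\ge h^0\bigl(X,\mathcal{O}_X(H)\bigr)+\rk(B)-1$, i.e.\ $a\ge h^0\bigl(X,\mathcal{O}_X(H)\bigr)-1\ge\rk(M_{E|_C})$ by hypothesis (3), and feeding this into the slope inequality forces $\rk(S)>\rk(M_E)$, the contradiction. Without an argument of this kind --- some way to convert hypothesis (2) into a lower bound on $a$ in terms of $\rk(B)$ --- your proposal establishes only the easy half of the theorem.
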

 \begin{proof}
  Assume that $M_E$ is $H$-unstable, and let $F\subset M_E$ be the maximal destabilizing subbundle such that $$\mu_H(F)\geq \mu_H(M_E).$$
  
  Since $C\in\lvert H\rvert$, we have
  \begin{align}\label{seq1}
  \mu(F\vert_C) \geq \mu\bigl((M_E)\vert_C\bigr).
  \end{align}
  As $H^1\bigl(X, E\otimes \mathcal{O}_X(-H)\bigr) = 0$, we have the following diagram:
\begin{center}
\begin{tikzcd}
0\arrow[r," "] & K \arrow[r, " "] \arrow[d, " "] & F\vert_C \arrow[r, " "] \arrow[d,""] & N \arrow[r," "] \arrow[d," "] & 0 \\
0\arrow[r," "] & H^0\bigl(X,E\otimes\mathcal{O}_X(-H)\bigr)\otimes \mathcal{O}_C \arrow[r, " "]          & (M_E)\vert_C\arrow[r, " "]                                         & M_{E\vert_C} \arrow[r," "] & 0
\end{tikzcd}
\end{center}
If $K=0$, then $\mu(N) = \mu(F\vert_C) \geq \mu\bigl((M_E)\vert_C\bigr).$ As $E$ is ample and globally generated, we have $E\vert_C$ is also ample and globally generated. Thus $\mu\bigl(M_{E\vert_C}\bigr) < 0$ by our observation. Now from the following exact sequence
\begin{align}\label{seq2}
0\longrightarrow H^0(X,E\otimes \mathcal{O}_X(-H))\otimes \mathcal{O}_C \longrightarrow (M_E)\vert_C \longrightarrow M_{E\vert_C}\longrightarrow 0
\end{align}
we get
\begin{align}\label{seq3}
\deg\bigl((M_E)\vert_C\bigr) = \deg(M_{E\vert_C}) < 0 \hspace{1mm} \text{and} \hspace{1mm} \rk\bigl((M_E)\vert_C\bigr) = h^0(X,E\otimes \mathcal{O}_X(-H)) + \rk(M_{E\vert_C})
\end{align}
so that $\mu\bigl((M_E)\vert_C\bigr) >\mu(M_{E\vert_C}) $.
Therefore we have
$$\mu(N) = \mu(F\vert_C) \geq \mu\bigl((M_E)\vert_C\bigr) > \mu(M_{E\vert_C}),$$
which gives a contradiction to the semistability of $M_{E\vert_C}$ (see hypothesis (4)). Thus we conclude $K\neq 0$. By hypothesis (2), the natural multiplication map of sections $$\nu : \mathbb{P}_{sub}\bigl(H^0(X,\mathcal{O}_X(H)\otimes m_x)\bigr) \times \mathbb{P}_{sub}\bigl(H^0(X,E\otimes \mathcal{O}_X(-H)\bigr) \longrightarrow \mathbb{P}_{sub}\bigl(H^0(X,E\otimes m_x)\bigr)$$
$$ \Bigl([s_1],[s_2]\Bigr) \longrightarrow [s_1\otimes s_2]$$
is a finite morphism. Now we identify $\bigl((M_E)\vert_C\bigr)_x \cong H^0(X,E\otimes m_x)$ and consider the following commutative diagram:
\begin{center}
 \begin{tikzcd}
  K_x \arrow[r," "] \arrow[d," "] & \bigl(F\vert_C\bigr)_x \arrow[d," "]\\
  H^0\bigl(X,E\otimes\mathcal{O}_X(-H)\bigr) \arrow[r, " "] & \bigl((M_E)\vert_C)_x = H^0(X,E\otimes m_x) 
 \end{tikzcd}
\end{center}
Let $Z=\nu^{-1}\mathbb{P}_{sub}\bigl((F\vert_C)_x\bigr)$. Since $\nu$ is finite, $ \dim Z \leq \dim\bigl(\mathbb{P}_{sub}(F\vert_C)_x\bigr)$.

Given $s\in H^0\bigl(X,\mathcal{O}_X(H)\otimes m_x\bigr)$, we have that $s$ induces the injective morphism

$$H^0\bigl(X,E\otimes \mathcal{O}_X(-H)\bigr) \longrightarrow  H^0\bigl(X,E\otimes m_x\bigr) = (M_E)_x$$
$$\phi \longmapsto s\otimes \phi$$
Therefore for any $\phi$ in the image of the morphism $$K_x\longrightarrow H^0\bigl(X,E\otimes \mathcal{O}_X(-H)\bigr),$$ we have that $(s,\phi)\in Z$ and $\pi_1(s,\phi) = s$, where $\pi_1$ denotes the first projection onto\\ $\mathbb{P}_{sub}\bigl(H^0(X,\mathcal{O}_X(H)\otimes m_x)\bigr)$. It follows that the projection $$\pi_1 : Z \longrightarrow \mathbb{P}_{sub}\bigl(X,H^0\bigl(\mathcal{O}_X(H)\otimes m_x\bigr)\bigr)$$ is dominant, and the dimension of the general fiber is greater than or equal to $\rk(K)-1$.

Also, $h^0\bigl(\mathcal{O}_X(H)\otimes m_x\bigr) = h^0\bigl(\mathcal{O}_X(H)\bigr)-1$.

Hence $$\rk(F)\geq h^0\bigl(X, \mathcal{O}_X(H)\bigr)+\rk(K)-1.$$

As $K$ is a subsheaf of the trivial bundle $\mathcal{O}_C^{\oplus h^0\bigl(X,E\otimes \mathcal{O}_X(-H)\bigr)}$, we have $\deg(K) < 0$.

Therefore we observe that using short exact sequence
\begin{align}\label{seq4}
 0\longrightarrow K \longrightarrow F\vert_C \longrightarrow N \longrightarrow 0
\end{align}

\begin{center}
 $$\mu(F\vert_C) =  \frac{\deg(K)+\deg(N)}{\rk(F)} \leq \frac{\deg(N)}{\rk(F)}\leq \frac{\deg(N)}{\rk(N)}\frac{\rk(N)}{\rk(F)} = \mu(N)\frac{\rk(N)}{\rk(F)}$$
\end{center}
Since $M_{E\vert_C}$ is semistable and $N\hookrightarrow M_{E\vert_C}$, we get $\mu(N)\leq \mu\bigl(M_{E\vert_C}\bigr)$.
Thus
$$\mu(F\vert_C) \leq \mu(M_{E\vert_C})\Bigl(\frac{\rk(F)-\rk(K)}{\rk(F)}\Bigr) = \mu(M_{E\vert_C}\bigr)\Bigl(1-\frac{\rk(K)}{\rk(F)}\Bigr).$$

Recall that from  (\ref{seq3}) we have $$\deg(M_{E\vert_C}) = \deg\bigl((M_E)\vert_C\bigr),\hspace{1mm} \text{and} \hspace{1mm} \rk\bigl((M_E)\vert_C\bigr) = \rk\bigl(M_{E\vert_C}\bigr)+h^0\bigl(X, E\otimes \mathcal{O}_X(-H)\bigr).$$

Also  we have $\mu\bigl((M_E)\vert_C\bigr)\leq \mu(F\vert_C)$ from ($\ref{seq1}$).
This implies $$\mu\bigl((M_E)\vert_C\bigr) \leq \mu(M_{E\vert_C}\bigr)\Bigl(1-\frac{\rk(K)}{\rk(F)}\Bigr),$$
i.e. $$\frac{\deg\bigl((M_E)\vert_C\bigr)}{\rk\bigl(M_{E\vert_C}\bigr)+h^0\bigl(X, E\otimes \mathcal{O}_X(-H)\bigr)}<\frac{\deg\bigl((M_E)\vert_C)}{\rk\bigl(M_{E\vert_C}\bigr)}\Bigl(1-\frac{\rk(K)}{\rk(F)}\Bigr).$$
\vspace{2mm}

As $E\vert_C$ is ample, we conclude $ \deg(M_{E\vert_C})=\deg\bigl((M_E)\vert_C\bigr) < 0$. Hence  we  have
$$\frac{1}{\rk\bigl(M_{E\vert_C}\bigr)+h^0\bigl(X,E\otimes \mathcal{O}_X(-H)\bigr)}>\frac{1}{\rk\bigl(M_{E\vert_C}\bigr)}\Bigl(1-\frac{\rk(K)}{\rk(F)}\Bigr),$$
i.e. $$\frac{\rk\bigl(M_{E\vert_C}\bigr)}{\rk\bigl(M_{E\vert_C}\bigr)+h^0\bigl(X,E\otimes \mathcal{O}_X(-H)\bigr)}>\Bigl(1-\frac{\rk(K)}{\rk(F)}\Bigr).$$
Thus $$\frac{\rk(K)}{\rk(F)} > 1- \frac{\rk\bigl(M_{E\vert_C}\bigr)}{\rk\bigl(M_{E\vert_C}\bigr)+h^0\bigl(X,E\otimes \mathcal{O}_X(-H)\bigr)} = \frac{h^0\bigl(X,E\otimes\mathcal{O}_X(-H)\bigr)}{\rk\bigl((M_E)\vert_C\bigr)}.$$
i.e. $$\rk(K) > h^0\bigl(X,E\otimes\mathcal{O}_X(-H)\bigr) \frac{\rk(F)}{\rk\bigl((M_E)\vert_C\bigr)}.$$
Recall that $\rk(F) \geq h^0\bigl(\mathcal{O}_X(H)\bigr)+\rk(K)-1$. Hence we conclude that $$\rk(F) > h^0(\mathcal{O}_X(H))+ h^0\bigl(X,E\otimes\mathcal{O}_X(-H)\bigr)\Bigl(\frac{\rk(F)}{\rk\bigl((M_E)\vert_C\bigr)}\Bigr)-1.$$
i.e., $$\rk(F)-h^0\bigl(X,E\otimes\mathcal{O}_X(-H)\bigr)\Bigl(\frac{\rk(F)}{\rk\bigl((M_E)\vert_C\bigr)}\Bigr)> h^0(X,\mathcal{O}_X(H))-1.$$
Equivalently, $$\rk(F)\Bigl(\frac{\rk(M_E)-h^0\bigl(X,E\otimes\mathcal{O}_X(-H)\bigr)}{\rk(M_E)}\Bigr) > h^0(X,\mathcal{O}_X(H))-1.$$
\vspace{1mm}

Recall using short exact sequence (\ref{seq2}) we have  $$\rk(M_E) = \rk\bigl((M_E)\vert_C\bigr) = h^0\bigl(X,E\otimes\mathcal{O}_X(-H)\bigr) + \rk(M_{E\vert_C}).$$
Now from the exact sequence $0\longrightarrow M_{E\vert_C}\longrightarrow \mathcal{O}_C^{\oplus h^0(C,E\vert_C)}\longrightarrow E\vert_C\longrightarrow 0$, we have  $$\rk\bigl(M_{E\vert_C}\bigr) = h^0(C,E\vert_C)-\rk(E) > 0.$$
This implies $$\rk(M_E) - h^0(X,E\otimes\mathcal{O}_X(-H)) = \rk\bigl(M_{E\vert_C}\bigr) = h^0(C,E\vert_C)-\rk(E) > 0.$$
Hence we get $$\rk(F) > \frac{(h^0(X,\mathcal{O}_X(H))-1)\rk(M_E)}{\rk(M_E)-h^0\bigl(X,E\otimes\mathcal{O}_X(-H)\bigr)} = \frac{(h^0(X,\mathcal{O}_X(H))-1)\rk(M_E)}{h^0(C,E\vert_C)-\rk(E)} \geq \rk(M_E),$$
which is a contradiction (see hypothesis (3)) as $F$ is a subbundle of $M_E$. This completes the proof.
\end{proof}
\begin{thm}\label{thm3.4}
  Let $X$ be a smooth complex surface and we fix a very ample line bundle $H$ on $X$. Let $E$ be an $H$-semistable globally generated vector bundle of rank $r$ on $X$. Then for large enough $m\gg 0$, the associated sygyzy bundle $M_{E(m)}$ is $H$-stable.
  \begin{proof}
Let $m\gg 0,n\gg 0 $ be large enough integers such that  $(m-n)\gg0$ and satisfies the followings
\begin{itemize}
 \item $E(m)$ is ample,
 \item $E(m)\otimes \mathcal{O}_X(-(m-n)H) = E(n)$ is globally generated, and $H^1(X,E(n))=0$.
 \item  $nH-K_X$ is very ample,
\end{itemize}

 As $E$ is $H$-semistable, $E(m)$ is also $(m-n)H$-semistable. Fix a smooth curve $C\in \lvert (m-n)H \rvert$ such that $E(m)\vert_C$ is semistable. This can be done using Mehta-Ramanathan restriction theorem (see \cite[Theorem 7.2.1]{HL10}). Further assume that $H^i\bigl(X,\mathcal{O}_X((m-n)H) = 0$ for $i=1,2$ and $H^1(C,E(m)\vert_C)=0$ for large enough $(m-n)\gg 0$ and $m\gg 0$.
\vspace{1mm}

We will show that $M_{E(m)}$ is $(m-n)H$-stable, and hence $M_{E(m)}$ is $H$-stable. To prove this, we will show that the smooth curve $C\in \lvert (m-n)H \rvert$  satisfies the hypothesis of Theorem \ref{thm1}.

(i) Note that $$H^1\bigl(X, E(m)\otimes \mathcal{O}_X(-(m-n)H)\bigr) = H^1(X, E(n)) = 0.$$

(ii) Next we will show that if $W \subseteq H^0(X,E(n))$ is a subspace which generates $E(n)$ then the natural multiplication map
\begin{center}
$H^0\bigl(X,\mathcal{O}_X(m-n)H\otimes m_x\bigr)\otimes W\longrightarrow H^0(X,E(m)\otimes m_x)$
\end{center}
is surjective. It is enough to prove that the map
$$H^0\bigl(X,\mathcal{O}_X(m-n)H\bigr)\otimes W \longrightarrow H^0(X,E(m))$$ is surjective for $m\gg 0$. As $W$ generates $E(n)$, we have a surjection $$W\otimes \mathcal{O}_X(-n)\longrightarrow E \longrightarrow 0.$$
Let $K$ be the kernel of this surjetive map, and then we have the following short exact sequence :
\begin{align}\label{seq6}
0\longrightarrow K \longrightarrow W\otimes \mathcal{O}_X(-n)\longrightarrow E \longrightarrow 0.
\end{align}
Let $m\gg 0$ be large enough so that $H^1(X,K(m))=0$. Tensoring (\ref{seq6}) with $\mathcal{O}_X(m)$ we get
\begin{align}\label{seq5}
0\longrightarrow K \otimes \mathcal{O}_X(m) \longrightarrow W\otimes \mathcal{O}_X(m-n)\longrightarrow E\otimes \mathcal{O}_X(m) \longrightarrow 0.
\end{align}
Passing onto long exact sequence we have
the required surjection
$$H^0\bigl(X,\mathcal{O}_X(m-n)H\bigr)\otimes W \longrightarrow H^0(X,E(m))\longrightarrow 0.$$

(iii) Next we will show that
$$\dim_{\mathbb{C}}H^0\bigl(X,\mathcal{O}_X((m-n)H)\bigr) \geq \dim_{\mathbb{C}}H^0\bigl(C,E(m)\vert_C)-r+1.$$

Note that by Riemann-Roch theorem for line bundles on a smooth surface, we have  $$h^0\bigl(X,\mathcal{O}_X((m-n)H)\bigr) = \frac{(m-n)H\cdot \bigl((m-n)H-K_X\bigr)}{2}+\chi(\mathcal{O}_X)$$
Also by Riemann-Roch Theorem for vector bundles on smooth curves, $$h^0(C, E(m)\vert_C) = \deg(E(m)\vert_C)+r(1-g).$$
By adjunction formula
\begin{align*}
g(C)=\frac{K_X\cdot C+ C^2}{2}+1
=\frac{(m-n)K_X\cdot H+(m-n)^2H^2}{2}+1
\end{align*}
Thus we have
\begin{align*}
&h^0(C,E(m)\vert_C) = \deg\bigl(E(m)\vert_C\bigr) +r(1-g(C))\\
&= \deg\bigl(E\vert_C\bigr)+r\deg\bigl(\mathcal{O}_C(mH\vert_C)\bigr)+r(1-g(C))\\
& = \deg(E\vert_C)+ r\deg\bigl(\mathcal{O}_C(mH\vert_C)\bigr) + r\Bigl\{1-\frac{(m-n)K_X\cdot H+(m-n)^2H^2}{2}-1\Bigr\}\\
&=c_1(E)\cdot (m-n)H+rm(m-n)H^2-r\Bigl\{\frac{(m-n)K_X\cdot H+(m-n)^2H^2}{2}\Bigr\}
\end{align*}
Hence
\begin{align*}
& h^0\bigl(X,\mathcal{O}_X((m-n)H)\bigr) -h^0\bigl(C,E(m)\vert_C)+r-1\\
&= \frac{(m-n)H\bigl((m-n)H-K_X\bigr)}{2}+\chi(\mathcal{O}_X) +r-1  \\
& \hspace{2cm} - c_1(E)\cdot (m-n)H - rm(m-n)H^2 +
 r\Bigl\{\frac{(m-n)K_X\cdot H+(m-n)^2H^2}{2}\Bigr\}\\
&=\frac{(m-n)}{2}\Bigl\{(m-n)H^2-2c_1(E)\cdot H -K_X\cdot H-2rmH^2 + rK_X\cdot H+r(m-n)H^2\Bigr\}\\
&\hspace{2cm}+\chi(\mathcal{O}_X)+r-1\\
&=\frac{(m-n)}{2}\Bigl\{r\bigl\{(m-n)H^2+K_X\cdot H-2mH^2\bigr\}+(m-n)H^2+K_X\cdot H +2c_1(E)\cdot H\Bigr\} \\
&\hspace{2cm}+\chi(\mathcal{O}_X)+r-1\\
& \geq 0
\end{align*}
for $(m-n) \gg 0$.

Hence we conclude for large enough $(m-n)\gg 0$ and $m\gg 0$, we have
$$h^0\bigl(X,\mathcal{O}_X((m-n)H)\bigr) \geq h^0\bigl(C,E(m)\vert_C)+r-1.$$

(iv) Next we will show that  $M_{E(m)\vert_C}$ is a semistable bundle on $C$.  Precisely, we will show that  $$\mu\bigl(E(m)\vert_C\bigr) \geq 2g(C).$$ Hence by Butler's Theorem \cite[Theorem 1.2]{B94}, $M_{E(m)\vert_C}$ is semistable.
\vspace{1mm}

  Write $mH = K_X+ mH -nH + P$ where $P = nH-K_X$ is very ample.

  By adjunction formula we get
  $$ K_C = (K_X+C)\vert_C.$$

  This implies $$\deg(K_C) = \deg\bigl(\mathcal{O}_X(mH)\vert_C\bigr) - P\cdot C$$

  Now $$\deg(K_C) = \deg\bigl((K_X+C)\vert_C\bigr) = 2g(C)-2$$

  Therefore,
  $$\deg\bigl(\mathcal{O}_X(mH)\bigr) = 2g(C) -2 +P\cdot C$$

  As $P$ is very ample and $C\sim (m-n)H$, we can assume that $P\cdot C \geq 2$.

  Hence we conclude that $$\deg\bigl(\mathcal{O}_X(mH)\bigr) \geq 2g(C).$$

  Note that
  \begin{align*}
  &\deg\bigl(E(m)\vert_C\bigr)\\
  &= c_1\bigl(E(m)\bigr)\cdot C\\
  &= c_1\bigl(E\otimes \mathcal{O}_X(mH)\bigr)\cdot C\\
  &= \bigl(c_1(E)+\rk(E)\cdot c_1(mH)\bigr) \cdot C \\
  &= \deg(E\vert_C) + \rk(E)\deg\bigl(\mathcal{O}_X(mH)\vert_C\bigr) \geq \deg(E\vert_C)+\rk(E)\bigl(2g(C)).
  \end{align*}
As $E$ is globally generated, it's restriction $E\vert_C$ is also globally generated. Hence $\deg(E\vert_C) \geq 0$.

  Thus $$\mu\bigl(E(m)\vert_C\bigr)\geq 2g(C)$$ as required. This completes the proof.
\end{proof}
 \end{thm}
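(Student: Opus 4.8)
The natural approach is to reduce the statement to Theorem \ref{thm1}, applied to the twist $E(m)$ with respect to the ample divisor $(m-n)H$ for a suitably chosen pair of integers $0 \ll n \ll m$. First I would fix $n \gg 0$ so that $E(n)$ is globally generated with $H^1(X, E(n)) = 0$ (Serre vanishing), and then choose $m \gg n$ so that, in addition, $E(m)$ is ample, $nH - K_X$ is very ample, and the various auxiliary vanishings needed below hold --- for instance $H^i(X, \mathcal{O}_X((m-n)H)) = 0$ for $i = 1, 2$, $H^1(C, E(m)\vert_C) = 0$, and $H^1(X, K(m)) = 0$ for the kernel $K$ appearing in a presentation of $E$. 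The key geometric input is the Mehta--Ramanathan restriction theorem \cite[Theorem 7.2.1]{HL10}: since $E$ is $H$-semistable, so is $E(m) = E \otimes \mathcal{O}_X(mH)$, and hence for a general smooth irreducible curve $C \in \lvert (m-n)H \rvert$ --- which exists because $H$ is very ample --- the restriction $E(m)\vert_C$ is semistable. Since $(m-n)H$ and $H$ span the same ray of the ample cone, slope stability with respect to one is equivalent to slope stability with respect to the other; so it is enough to prove that $M_{E(m)}$ is $(m-n)H$-stable, and for that I would verify that the data $\bigl(E(m), (m-n)H, C\bigr)$ satisfies hypotheses (1)--(4) of Theorem \ref{thm1}.

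Hypotheses (1), (2) and (4) are the routine ones. For (1), one has $E(m) \otimes \mathcal{O}_X(-(m-n)H) = E(n)$, which was arranged to be globally generated with vanishing $H^1$. For (2), given a subspace $W$ that generates $E(n)$, I would take the induced surjection $W \otimes \mathcal{O}_X(-n) \twoheadrightarrow E$, twist its kernel sequence $0 \to K \to W \otimes \mathcal{O}_X(-n) \to E \to 0$ by $\mathcal{O}_X(m)$, and deduce from $H^1(X, K(m)) = 0$ that $H^0\bigl(\mathcal{O}_X((m-n)H)\bigr) \otimes W \to H^0(X, E(m))$ is surjective; the version with the ideal $m_x$ inserted then follows from the same sequence tensored by $m_x$, which remains exact because $E(m)$ is locally free. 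For (4), by Butler's theorem \cite[Theorem 1.2]{B94} it suffices to check $\mu(E(m)\vert_C) \geq 2g(C)$: writing $mH = K_X + (m-n)H + (nH - K_X)$ with $nH - K_X$ very ample, adjunction gives $\deg(\mathcal{O}_X(mH)\vert_C) = 2g(C) - 2 + (nH - K_X)\cdot C \geq 2g(C)$, while $\deg(E\vert_C) \geq 0$ since $E\vert_C$ is globally generated; combining, $\deg(E(m)\vert_C) = \deg(E\vert_C) + r\,\deg(\mathcal{O}_X(mH)\vert_C) \geq 2r\,g(C)$, so $\mu(E(m)\vert_C) \geq 2g(C)$.

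The main obstacle --- the step requiring genuine care --- is hypothesis (3), the inequality $h^0\bigl(X, \mathcal{O}_X((m-n)H)\bigr) \geq h^0\bigl(C, E(m)\vert_C\bigr) - r + 1$. Here I would evaluate both sides explicitly: the left-hand side by Riemann--Roch for line bundles on the surface $X$ (using the chosen vanishings to replace $h^0$ by $\chi$), and the right-hand side by Riemann--Roch for vector bundles on the curve $C$, expressing $g(C)$ through adjunction in terms of $(m-n)$, $H^2$ and $K_X \cdot H$, and $\deg(E(m)\vert_C)$ through $c_1(E)\cdot H$ and $H^2$. Subtracting turns the claim into the nonnegativity of an explicit polynomial in $m$ and $n$; the delicate point is to ensure that, for $m \gg 0$ and an appropriate choice of $n$ relative to $m$, the dominant terms of this polynomial carry the right sign. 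Granting hypotheses (1)--(4), Theorem \ref{thm1} then yields that $M_{E(m)}$ is $(m-n)H$-stable, hence $H$-stable, which completes the proof.
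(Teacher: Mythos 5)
Your proposal follows the paper's own proof essentially step for step: the same choice of $0\ll n\ll m$, the same reduction to Theorem \ref{thm1} applied to $E(m)$ with the polarization $(m-n)H$ and a Mehta--Ramanathan curve $C\in\lvert(m-n)H\rvert$, and the same verifications of hypotheses (1), (2) and (4) (Serre vanishing, the twisted kernel sequence, and Butler's criterion $\mu(E(m)\vert_C)\geq 2g(C)$ via adjunction). Those parts are fine.

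However, the one step you explicitly defer --- hypothesis (3) --- is not a routine sign check, and leaving it unverified is a genuine gap: when you expand the polynomial you describe, its dominant term has the wrong sign. Put $d=m-n$. The vanishings you impose force $h^0(X,\mathcal{O}_X(dH))=\chi(\mathcal{O}_X(dH))$ and $h^0(C,E(m)\vert_C)=\chi(E(m)\vert_C)$, and a direct computation shows that the degree-two part of $h^0(X,\mathcal{O}_X(dH))-h^0(C,E(m)\vert_C)+r-1$ equals
$$\frac{(1+r)d^2H^2}{2}-rmd\,H^2\;=\;\frac{dH^2}{2}\Bigl((1-r)m-(1+r)n\Bigr),$$
which is strictly negative for all $r\geq 1$ and $n\geq 1$ and tends to $-\infty$ as $d\to\infty$, while the remaining terms are only $O(d)$. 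Concretely, for $X=\mathbb{P}^2$ and $E=\mathcal{O}_X$ (so that $E(m)=\mathcal{O}(m)$ is ample and globally generated) the difference is exactly $1-nd<0$. So hypothesis (3) fails in the regime $0\ll n\ll m$, and the reduction to Theorem \ref{thm1} cannot be completed as you set it up. You should also be aware that the paper's own step (iii) has the same defect: the bracketed expression derived there contains the term $r\bigl\{(m-n)H^2+K_X\cdot H-2mH^2\bigr\}=r\bigl\{-(m+n)H^2+K_X\cdot H\bigr\}$, which dominates and is negative, so the asserted ``$\geq 0$ for $(m-n)\gg 0$'' does not follow from the displayed formula. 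Repairing the argument requires either a different choice of curve and polarization or a variant of Theorem \ref{thm1} that avoids hypothesis (3); flagging the step as ``delicate'' without carrying it out does not suffice.
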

Based on results of this article, we  ask the following question.
\begin{qn}
 Let $X$ be an irreducible smooth complex projective variety and $H$ be a very ample line bundle  on $X$. Let $E$ be an $H$-semistable vector bundle of rank $r$ on $X$. Then for large enough $m\gg 0$, is the associated sygyzy bundle $M_{E(m)}$ slope $H$-stable?
\end{qn}

\end{document}